\def \Br {{\rm{Br}}}
\def\ov{\overline}
\def \Gal {{\rm{Gal}}}
\def \Pic {{\rm {Pic}}}
\def\NS{{\rm NS}}
\def \Z {{\mathbb Z}}
\def \Q {{\mathbb Q}}
\def\un{\underline}
\def \Aut{{\rm Aut}}
\def\inv{{\rm inv}}
\def \A{{\bm A}}
\def \Spec {{\rm{Spec\,}}}
\newtheorem{theo}{Theorem}[section]
\newtheorem{lem}[theo]{Lemma}
\theoremstyle{definition}
\theoremstyle{remark}
\begin{document}


%
%

\title{Applications of the fibration method to the Brauer-Manin obstruction
to the existence of zero-cycles on certain varieties.
}



\author{Evis Ieronymou }

\date{}

\maketitle

\begin{abstract}
We study the Brauer-Manin obstruction to the existence of zero-cycles of degree $d$ on certain classes of varieties over number fields.
We generalize existing results in the literature and prove some results about fibrations over the projective line, where the geometric Brauer group of the generic fibre is not assumed to be finite. The idea is to assume that the Brauer-Manin obstruction to the Hasse principle is the only one for certain fibres and then deduce analogous results for zero-cycles.
\end{abstract}



\section{Introduction}

 The theory of the Brauer-Manin obstruction to the Hasse principle and its variants has developed considerably since its introduction by Manin. In the context of zero-cycles a wide open conjecture is that the Brauer-Manin obstruction to the existence of a zero-cycle of degree $d$ is the only one \textit{for any} smooth, projective, geometrically integral variety over a number field. This conjecture was put forward in various forms and for various classes of smooth, projective,
geometrically irreducible varieties by Colliot-Th\'el\`ene and Sansuc \cite{CT.S} (see also \cite{CT95}) and by Kato and Saito \cite{KS}.
We refer the
reader to \cite{W} for more information and a more refined form of the conjecture.

In general however there is not much evidence for the aforementioned conjecture. Liang was able to prove the conjecture for rationally connected varieties by assuming the corresponding conjecture
concerning rational points \cite{Lia}, see also his subsequent work \cite{Lia2}, \cite{Lia3}, \cite{Lia4}, \cite{Lia5}. Following the ideas in \cite{Lia}, the results were extended to $K3$ surfaces \cite{Ier1} and to Kummer varieties associated to $2$-coverings of abelian varieties \cite{BalN}.

In this short note we  refine the arguments appearing in \cite{Ier1} by using some recent results of Cadoret and Charles about uniform boundedness of Brauer groups \cite{CC}, in order to generalize the main results of \cite{Ier1} and \cite{BalN}.
In particular, besides having somewhat wider applicability our results allow us to remove the restriction that $\delta$ is odd that appears in \cite[Thm. 1.1]{BalN}. 
We also present two results about families over $\mathbb P^1$, whose proof is an application of the results in \cite{HW} on the fibration method for zero-cycles.
We remark that in contrast to Liang's results, we do not assume that the geometric Brauer group of the generic fibre of the corresponding fibration used is finite.

For the rest of this note $X/k$ denotes a smooth, projective and geometrically integral variety over a number field $k$.
When $G$ is an abelian group and $n$ is a positive integer, we denote by $G[n]$ the subgroup of $G$ consisting of the elements which are killed by $n$ and by $G\{n\}$ the subgroup of $G$ consisting of the elements which are killed by some power of $n$.

We say that $X$ satisfies $\textrm{BM}_n$ if the following implication holds.
$$
X(\mathbb A_k)^{\Br(X)}=\emptyset \Rightarrow X(\mathbb A_k)^{\Br(X)\{n\}}=\emptyset.
$$
This terminology was introduced in \cite{CV}, where they proved among other things that Kummer varieties associated to $2$-coverings of abelian varieties satisfy $\textrm{BM}_2$ \cite[Thm. 1.7]{CV}.

Let $L_n$ be the following statement.

{\it The Brauer-Manin obstruction to the Hasse principle attached to $\Br(X)\{n\}$ is the only one.}

In other words, $L_n$ is the statement that the following implication holds.
$$
X(\mathbb A_k)^{\Br(X)\{n\}}\neq \emptyset \Rightarrow X(k)\neq\emptyset.
$$

Trivially, if $\textrm{BM}_n$ holds then $L_n$ is equivalent to the statement that the Brauer-Manin obstruction to the Hasse principle is the only one.

Finally, recall that the Brauer-Manin obstruction to the existence of a zero-cycle of degree $d$ on $X$ is the only one means that
if there exists a family of local zero-cycles of degree $d$ on $X$ which is orthogonal to $\Br(X)$ then there exists a zero-cycle of degree $d$ on $X$.

Our first result is the following:

\begin{theo}\label{OneX}
Let $X/k$ be a smooth, projective and geometrically integral variety over a number field $k$.
 Suppose the following.
\begin{itemize}
\item[(1)] $\Br(X)/\Br_0(X)$ and $\Br(\ov X)^{\Gal(\ov k/k)}$ are finite abelian groups;
  \item[(2)] $\Pic(\ov X)$ is finitely generated and torsion free, equivalently $H^1(X,\mathcal O_X)$ is trivial and $\NS(\ov{X})$ is torsion-free;
  \item[(3)] One of the following holds.

  \begin{itemize}

  \item[(I)]  The integral Mumford-Tate conjecture holds for $X$ and the Brauer-Manin obstruction to the Hasse principle is the only one for $X_F$ for all number fields $F$;

    \item[(II)]           \begin{itemize}
                                             \item[(a)] $X_F$ satisfies $L_n$ for all number fields $F$ and some positive integer $n$ and
                                            \item[(b)] $X$ satisfies the $\ell$-adic Tate conjecture for divisors, for all $\ell$ dividing $n$.
                       \end{itemize}

 \end{itemize}

                               \end{itemize}
Then the Brauer-Manin obstruction to the existence of a zero-cycle of degree $d$ on $X$ is the only one, for any integer $d$.

If instead of 3 only 3 (II) (a) holds then  the Brauer-Manin obstruction to the existence of a zero-cycle of degree $d$ on $X$ is the only one, for any integer $d$ coprime to $n$.

\end{theo}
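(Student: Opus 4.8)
The plan is to follow the strategy of \cite{Lia}, as adapted to the case of an infinite geometric Brauer group in \cite{Ier1}, and to make the argument uniform over field extensions by means of \cite{CC}. Concretely, I start from a family $(z_v)_v$ of local zero-cycles of degree $d$ on $X$ which is orthogonal to $\Br(X)$, the goal being to produce a global zero-cycle of degree $d$. By hypothesis (1) the group $\Br(X)/\Br_0(X)$ is finite, so the orthogonality condition involves only finitely many classes; after enlarging a finite set $S$ of places to contain the archimedean and the bad places together with the finitely many places where these classes are nonconstant, the Brauer--Manin condition becomes a condition at the places in $S$ alone. A moving lemma then lets me replace the $z_v$ by effective local zero-cycles in good position, that is, by collections of local closed points.

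The core of the argument is to realize these local data as coming from a single closed point of $X$. Following \cite{Ier1}, I would use a pencil (Bertini-type) argument, or directly the comparison of \cite{Lia}, to produce a finite extension $F/k$ with $[F:k]=d$ together with local points of $X_F$ approximating the prescribed local zero-cycles, in such a way that the resulting adelic point of $X_F$ is orthogonal to $\Br(X_F)\{n\}$. Granting this, hypothesis 3(II)(a), namely that $X_F$ satisfies $L_n$, yields a rational point of $X_F$; its image under the finite morphism $X_F\to X$ is a closed point of degree $[F:k]=d$, which is the desired zero-cycle. Note that, by a restriction--corestriction computation, the global orthogonality of $(z_v)_v$ to $\Br(X)$ already forces orthogonality of the constructed adelic point to the image of $\Br(X)\{n\}$ in $\Br(X_F)\{n\}$.

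The main obstacle is precisely to arrange orthogonality to all of $\Br(X_F)\{n\}$, and not merely to the image of $\Br(X)\{n\}$: on passing to the varying fields $F$ new Galois-invariant (in particular transcendental) classes can appear, and a priori there is no bound on them that is uniform in $F$. This is exactly where \cite{CC} is used. Hypothesis (1), that $\Br(\ov X)^{\Gal(\ov k/k)}$ is finite, together with hypothesis (2), that $\Pic(\ov X)$ is finitely generated and torsion-free (so that $\NS(\ov X)=\Pic(\ov X)$ and the Kummer sequence identifies $\Br(\ov X)[m]$ with the quotient of the \'etale cohomology group $H^2(\ov X,\mu_m)$ by $\NS(\ov X)/m$), places us in a situation where the uniform boundedness results of Cadoret--Charles apply; they bound $\Br(\ov X)^{\Gal(\ov F/F)}\{n\}$ uniformly as $F$ ranges over all finite extensions of $k$. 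That uniform bound is what makes the finitely many gap classes controllable, so that one can kill their pairings by adjusting the local points at the auxiliary places, uniformly throughout the construction. I expect this uniform boundedness step to be the genuinely delicate point; everything else is a formal adaptation of \cite{Lia} and \cite{Ier1}.

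Finally, the restriction to $d$ coprime to $n$ is the exact analogue, for general $n$, of the hypothesis that $\delta$ be odd in \cite[Thm.~1.1]{BalN} (the case $n=2$). It arises because $L_n$, being a statement about $\Br(X_F)\{n\}$, only furnishes control of the $n$-primary part of the Brauer obstruction; correspondingly the construction above only controls the relevant data modulo $n$-primary information, so that the target degree $d$ can be attained precisely when $d$ is a unit modulo $n$, i.e. when $\gcd(d,n)=1$. In the main statement this restriction is removed by invoking hypothesis 3(II)(b): the $\ell$-adic Tate conjecture for divisors at the primes $\ell\mid n$ pins down the algebraic Brauer classes at those primes and, together with the finiteness in (1), upgrades the $n$-primary control to control of the entire Brauer obstruction, thereby allowing arbitrary $d$. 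Option 3(I) achieves the same through the integral Mumford--Tate conjecture, which supplies the requisite control at all primes simultaneously while the full Hasse principle assumption plays the role of $L_n$ for every $n$.
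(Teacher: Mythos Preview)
Your overall strategy---follow \cite{Ier1} (which in turn means working with the trivial fibration $X\times\mathbb{P}^1\to\mathbb{P}^1$ and applying \cite{HW}), and invoke \cite{CC} for uniform boundedness of the transcendental Brauer classes over varying extensions---is the paper's strategy. But you have the logical dependencies among the hypotheses wrong, and this is a genuine gap.

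The uniform boundedness theorem of Cadoret--Charles does \emph{not} follow from (1) and (2); it takes the $\ell$-adic Tate conjecture for divisors as input. That is exactly what hypothesis 3(II)(b) is for. Under 3(II)(a)+(b), the paper invokes \cite[Thm.~1.2.1]{CC} to bound $|\Br(\ov X)^{\Gal(\ov k/N)}\{n\}|$ uniformly over extensions $N/k$ of degree at most $\delta$, and this is what lets one build a Hilbert subset over which $\Br(X)$ surjects onto the image of $\Br(X_{k'})\{n\}$ in $\Br(X_{k'})/\Br_0(X_{k'})$ for \emph{arbitrary} $\delta$ (hence arbitrary $d$). When only 3(II)(a) holds, \cite{CC} is unavailable and a different mechanism is used: one takes $M/k$ to be the fixed field of $\ker(\Gamma_k\to\Aut(\Br(\ov X)[n]))$, arranges $k'$ linearly disjoint from $M$, and then shows that $\Gamma_k$ and $\Gamma_{k'}$ have the same image in $\Aut(\Br(\ov X)[\ell^m])$ for all $m$ and all $\ell\mid n$. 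This last step uses that $\ker\bigl(\Aut(G)\to\Aut(G[\ell])\bigr)$ is an $\ell$-group for a finite abelian $\ell$-group $G$, together with the fact that $[k':k]=\delta$ is coprime to $\ell$; arranging $\delta\equiv d\pmod n$ via \cite[Prop.~7.5]{HW} is where the hypothesis $\gcd(d,n)=1$ actually enters. Your account---that 3(II)(b) ``upgrades the $n$-primary control to control of the entire Brauer obstruction''---misidentifies the mechanism: even under full 3(II) one only ever controls the $n$-primary part (that is all $L_n$ requires); what (b) buys, via \cite{CC}, is uniformity in the degree $\delta$, not in the prime.
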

{\it Remarks}
  \begin{enumerate}
    \item The integral Mumford-Tate conjecture for divisors holds for $K3$ surfaces (see e.g. \cite[Cor. 5.3]{OS}).
     The $\ell$-adic Tate conjecture for divisors holds for abelian varieties, $K3$ surfaces and varieties dominated by products of varieties that satisfy it.
    It also holds for Kummer varieties attached to 2-coverings of abelian varieties.
    \item Suppose that $X$ is a $K3$ surface. By \cite[Thm. 1.2]{SZ} assumption (1) of the Theorem holds. Applying the Theorem (with assumption 3 (I)), we recover the main result of \cite{Ier1}.
    \item  Suppose that $X$ is a  Kummer variety attached to a 2-covering of an abelian variety.  By \cite[Cor. 2.8]{SZ2} assumption (1) of Theorem \ref{OneX} holds. Applying
    Theorem \ref{OneX} (with assumption 3 (II) for $n=2$), we recover the main result of \cite{BalN}, without the restriction that the degree of the zero-cycle is odd.

  \end{enumerate}

The next two results concern families over $\mathbb P^1$. Their proof is based on the fibration method for zero-cycles as elaborated in \cite{HW}.
The fact that there is no finiteness assumption on the geometric Brauer group of the generic fibre makes the proofs a
 bit different than the proofs of similar results in the literature (cf. \cite[Thm 2.5, Rem. 2.8 and Lem. 3.10]{Lia5}).
\begin{theo}\label{Fam.P1}
 Let $X/k$ be a smooth, projective and geometrically integral variety over a number field $k$. Let $f:X\to \mathbb P^1$ be a smooth and proper morphism. Let $n$ be a positive integer and denote by $\eta$ the generic point of $\mathbb P_k^1$.
Assume the following.
\begin{enumerate}
\item $\Br(X)/\Br_0(X)$ is finite and $\Br(X)\to \Br(X_{\eta})/\Br_0(X_{\eta})$ is surjective;
\item $\Pic(X_{\ov{\eta}})$ is finitely generated and torsion free;
\item $X_{\eta}$ satisfies the $\ell$-adic Tate conjecture for divisors, for any $\ell$ dividing $n$;
\item There exists a Hilbert subset $M\subseteq \mathbb P^1$ such that for any closed point $c\in M$, the variety  $X_c$ satisfies $L_n$.

\end{enumerate}
Then the Brauer-Manin obstruction to the existence of a zero-cycle of degree $d$ on $X$ is the only one, for any integer $d$.
\end{theo}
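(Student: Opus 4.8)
The plan is to derive the statement from the fibration method for zero-cycles of \cite{HW}, applied to the morphism $f\colon X\to\mathbb P^1$, with the (suitably upgraded) hypothesis $L_n$ on the fibres playing the role of the fibre input. Recall that this method converts a family of local zero-cycles of degree $d$ on $X$ that is orthogonal to $\Br(X)$ into a global zero-cycle of degree $d$, provided two inputs hold: a \emph{Brauer condition}, relating $\Br(X)$ to the Brauer groups of the generic and of the closed fibres; and a \emph{fibre condition}, namely that the smooth fibres $X_c$ over the closed points $c$ of a Hilbert subset of $\mathbb P^1$ satisfy the Hasse principle up to the Brauer-Manin obstruction, as varieties over their residue fields $k(c)$. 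The decisive feature is that this last is a statement about \emph{rational points}: the global cycle is assembled out of $k(c)$-points of the fibres $X_c$, which is exactly why the hypothesis on the fibres is phrased, via $L_n$, in terms of rational points. I would therefore begin with a family of local zero-cycles of degree $d$ orthogonal to $\Br(X)$ and organise the proof around verifying these two inputs.

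The Brauer condition is supplied by hypothesis (1). Finiteness of $\Br(X)/\Br_0(X)$ together with the surjectivity of $\Br(X)\to\Br(X_\eta)/\Br_0(X_\eta)$ forces $\Br(X_\eta)/\Br_0(X_\eta)$ to be finite and, more to the point, ensures that every Brauer class on the generic fibre is, modulo constants, the restriction of a class defined on all of $X$. By a standard specialization argument over a Hilbert subset, the analogous fact holds for the closed fibres: the classes of $\Br(X_c)$ coming from $\Br(X_\eta)$ are specializations of classes on $X$, so the local components of our family, being orthogonal to $\Br(X)$, restrict fibre by fibre to local points orthogonal to the image of $\Br(X)$ in $\Br(X_c)$. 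This transports the global orthogonality hypothesis down to the fibres, and I expect it to be routine given (1).

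The heart of the argument, and the step I expect to be the main obstacle, is the fibre condition: promoting $L_n$ to the full Hasse principle up to Brauer-Manin for the fibres $X_c$. Hypothesis (4) gives only that orthogonality to $\Br(X_c)\{n\}$ produces a $k(c)$-point, so on each relevant fibre I must exhibit a local adelic point orthogonal to \emph{all} of $\Br(X_c)\{n\}$. The classes of $\Br(X_c)\{n\}$ lying in the image of $\Br(X)$ are orthogonalized for free by the discussion above; the remaining $n$-primary classes must be handled directly, and controlling them uniformly in $c$ is the crux. Here hypotheses (2) and (3)---finite generation and torsion-freeness of $\Pic(X_{\ov\eta})$, and the $\ell$-adic Tate conjecture for divisors on $X_\eta$ for every $\ell\mid n$---are exactly the inputs to the uniform boundedness theorem of Cadoret-Charles \cite{CC} for the family $f$, which yields a bound, independent of $c$, on the order of $\Br(X_c)\{n\}$. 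Consequently there are, uniformly in $c$, only finitely many $n$-primary classes of bounded order to orthogonalize; after passing to a smaller Hilbert subset of $M$ and prescribing the local zero-cycles at finitely many auxiliary places (which the machinery of \cite{HW} permits without altering the degree $d$ or the orthogonality to $\Br(X)$), the adelic point on $X_c$ becomes orthogonal to $\Br(X_c)\{n\}$, and $L_n$ then gives $X_c(k(c))\neq\emptyset$. The genuinely delicate points are the uniformity in $c$ of the Cadoret-Charles bound and the compatibility of these auxiliary local adjustments with the Brauer-Manin pairing; this is where the absence of any finiteness assumption on the geometric Brauer group of the generic fibre makes itself felt, and where the argument departs from the finite-Brauer-group methods of \cite{Lia5}.

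With the fibre condition in hand, the fibration method of \cite{HW} assembles the desired global zero-cycle of degree $d$ on $X$ out of the given family of local zero-cycles. Since $\mathbb P^1$ carries closed points of every degree, the auxiliary fibres used can be chosen so that their degrees combine to any prescribed target, so no coprimality restriction on $d$ is needed and the conclusion holds for every integer $d$---in contrast to the variant of Theorem~\ref{OneX} in which only 3 (II) (a) holds, where one reaches only degrees coprime to $n$. Feeding the two inputs into the criterion of \cite{HW} completes the proof.
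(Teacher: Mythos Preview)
Your overall architecture---fibration method of \cite{HW}, hypothesis (1) for the Brauer input, Cadoret--Charles for uniform control of the fibres' Brauer groups, and $L_n$ to produce $k(c)$-points---matches the paper. But the step where you ``handle directly'' the $n$-primary classes of $\Br(X_c)$ not coming from $\Br(X)$, by ``prescribing the local zero-cycles at finitely many auxiliary places'' so that ``the adelic point on $X_c$ becomes orthogonal to $\Br(X_c)\{n\}$'', is not a valid move. You cannot orthogonalise a finite set of Brauer classes on $X_c$ by local adjustment: if some class in $\Br(X_c)\{n\}$ genuinely obstructs, no modification of the adelic point will kill its pairing. The machinery of \cite{HW} lets you choose the closed point $s$ inside a prescribed Hilbert subset and to approximate at finitely many places, but the orthogonality to $\Br(X_s)\{n\}$ must be \emph{inherited} from orthogonality to $\Br(X)$, not manufactured afterwards.

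The paper uses the Cadoret--Charles bound for a different purpose: it bounds $|\Br(X_{\ov s})^{\Gamma_{k(s)}}\{n\}|$ uniformly (for $[k(s):k]=\delta$), and from this, together with the specialisation isomorphisms $\NS(X_{\ov\eta})\cong\NS(X_{\ov s})$ and $\Br(X_{\ov\eta})\cong\Br(X_{\ov s})$, one constructs a finite Galois extension $F/k$ and further Hilbert subsets $H_2,H_3$ so that, for $s\in H_1\cap H_2\cap H_3\cap M$ of degree $\delta$, the rows of the Hochschild--Serre filtration for $X_\eta$ and $X_s$ match up and the specialisation map $\Br(X_\eta)/\Br_0(X_\eta)\{n\}\to\Br(X_s)/\Br_0(X_s)\{n\}$ is \emph{surjective}. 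Since $\Br(X)$ surjects onto $\Br(X_\eta)/\Br_0(X_\eta)$ by hypothesis (1), it follows that the image of $\Br(X)$ in $\Br(X_s)/\Br_0(X_s)$ already contains all of $\Br(X_s)\{n\}$ modulo constants. There are therefore \emph{no} ``remaining'' classes to orthogonalise: orthogonality to $\Br(X)$ gives orthogonality to $\Br(X_s)\{n\}$ for free, and $L_n$ applies directly. Replacing your local-adjustment paragraph with this surjectivity argument is the missing idea.
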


We have a similar result with slightly different assumptions.

\begin{theo}\label{Fam.P1b}
 Let $X/k$ be a smooth, projective and geometrically integral variety over a number field $k$.
  Let $f:X\to \mathbb P^1$ be a smooth and proper morphism. Let $n$ be a positive integer and denote by $\eta$ the generic point of $\mathbb P_k^1$.
Assume the following.
\begin{enumerate}
\item $\Br(X)/\Br_0(X)$ is finite and $\Br(X)\to \Br(X_{\eta})/\Br_0(X_{\eta})$ is surjective;
\item  $\Pic(X_{\ov{\eta}})$ is finitely generated and torsion free;
\item There exists a Hilbert subset $M\subseteq \mathbb P^1$ such that
\begin{enumerate}
  \item for any closed point $c\in M$ the integral Mumford-Tate conjecture holds for $X_c$ and the Brauer-Manin obstruction to the Hasse principle is the only one for  $X_c$ and
  \item for any positive integer $C$ the set $\{X_{\ov c}: c\in M, \ \ [k(c):k]=C\}$, contains finitely many isomorphism classes of varieties over $\ov k$.

\end{enumerate}

\end{enumerate}
Then the Brauer-Manin obstruction to the existence of a zero-cycle of degree $d$ on $X$ is the only one, for any integer $d$.
\end{theo}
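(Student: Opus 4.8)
The plan is to run the fibration method for zero-cycles of \cite{HW} exactly along the lines of Theorem \ref{Fam.P1}, the only difference being the route by which one manufactures a suitable integer $n$ and the Hasse principle on the fibres: here it is supplied by the Mumford--Tate package of Theorem \ref{OneX}(3)(I) rather than by a fixed Tate conjecture together with $L_n$. Fix the degree $d$. Hypothesis (1) provides a fixed finite subgroup $\mathcal B\subseteq\Br(X_\eta)$, coming from the image of $\Br(X)$, which surjects onto $\Br(X_\eta)/\Br_0(X_\eta)$. Since $f$ is smooth and proper there are no degenerate fibres to analyse, so the Harpaz--Wittenberg reduction asks only that, for closed points $c$ ranging over a Hilbert subset of $\mathbb P^1$, the fibre $X_c$ obey the Hasse principle with respect to the specialization $\mathcal B_c$ of $\mathcal B$. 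Because $\mathcal B$ is a single finite group, the orders of the $\mathcal B_c$ are bounded independently of $c$, and this is the precise sense in which (1) guarantees that the horizontal Brauer classes controlling the local conditions on the fibres are already seen on the total space.

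The core of the argument is the verification of this fibre condition. First I would observe that, for a fixed $d$, the zero-cycle produced by the method involves only closed points $c$ whose residue field has degree bounded by a constant $C=C(d,X)$. For such $c\in M$, hypothesis (3)(b) provides finitely many isomorphism classes of $X_{\ov c}$, and feeding this finiteness together with the integral Mumford--Tate conjecture of (3)(a) into the uniform boundedness theorem of Cadoret--Charles \cite{CC} yields a single integer $n$ such that every such $X_c$ satisfies $\textrm{BM}_n$ (the $\ell$-adic Tate conjecture for divisors for $X_\eta$ for $\ell\mid n$, which Theorem \ref{Fam.P1} takes as a hypothesis, likewise follows by specialising from the fibres). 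Combining $\textrm{BM}_n$ with the assumption in (3)(a) that the Brauer--Manin obstruction to the Hasse principle is the only one for $X_c$, one gets $L_n$ for $X_c$, that is $X_c(\mathbb A_{k(c)})^{\Br(X_c)\{n\}}\neq\emptyset\Rightarrow X_c(k(c))\neq\emptyset$. After shrinking $M$ to a Hilbert subset on which the Picard rank does not jump, so that $\NS(X_{\ov c})=\NS(X_{\ov\eta})$ and specialization is an isomorphism on the relevant Brauer classes, condition (2) together with the surjectivity in (1) shows that $\mathcal B_c$ dominates $\Br(X_c)\{n\}$ modulo constants; hence $\mathcal B_c$-orthogonality forces $\Br(X_c)\{n\}$-orthogonality, and $L_n$ delivers a $k(c)$-point. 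This is exactly the fibre input required by the method, so its conclusion yields a zero-cycle of degree $d$ on $X$; letting $d$ vary completes the proof.

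I expect the main obstacle to be precisely the uniformity of $n$ across the fibres. The groups $\Br(X_{\ov c})^{\Gal(\ov k/k(c))}$ are finite for each $c$ but are a priori unbounded, and, as stressed in the introduction, the geometric Brauer group of the generic fibre is not assumed finite, so any argument that treated the fibres one at a time would collapse. Converting the pointwise finiteness that comes out of the integral Mumford--Tate conjecture into a bound that is uniform over all $c\in M$ of bounded degree is exactly what \cite{CC} accomplishes, and hypothesis (3)(b) is what licenses its application. A secondary technical point, which I would handle by intersecting $M$ with the Hilbert set produced by the Harpaz--Wittenberg reduction and by discarding the thin Picard-jumping locus, is to ensure that the bounded group $\mathcal B_c$ genuinely captures the full $n$-primary Brauer--Manin obstruction on $X_c$ rather than a proper part of it.
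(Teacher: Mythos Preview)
Your overall plan---rerun the proof of Theorem \ref{Fam.P1} and replace the step that produces a uniform bound on the transcendental Brauer groups of the fibres---is exactly what the paper does. However, you have misidentified the source of that bound. The paper explicitly notes that here one \emph{cannot} appeal to Cadoret--Charles \cite[Thm.~1.2.1]{CC}: that result bounds only the $n$-primary part $\Br(X_{\ov s})^{\Gamma_{k(s)}}\{n\}$ for a \emph{pre-specified} $n$, and its input is the $\ell$-adic Tate conjecture for $X_\eta$ (hypothesis (3) of Theorem \ref{Fam.P1}), which is not among the assumptions of Theorem \ref{Fam.P1b}. The correct reference is Orr--Skorobogatov \cite[Thm.~5.1]{OS}, whose hypotheses are precisely the integral Mumford--Tate conjecture for the fibres together with the finiteness of geometric isomorphism classes---that is, exactly (3)(a) and (3)(b)---and whose conclusion is a uniform bound on the \emph{entire} group $\Br(X_{\ov s})^{\Gamma_{k(s)}}$ for $s\in M$ with $[k(s):k]=\delta$.

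Once this substitution is made, your detour through $\textrm{BM}_n$ and $L_n$ is unnecessary, and so is your parenthetical claim that the Tate conjecture for $X_\eta$ ``likewise follows by specialising from the fibres'' (which is not justified and would need an argument of its own). With the full invariant Brauer group of each relevant fibre bounded, the construction of the Hilbert subsets $H_2$ and $H_3$ and the diagram chase from the proof of Theorem \ref{Fam.P1} go through with the $\{n\}$'s simply removed, and one concludes directly from (3)(a) that the Brauer--Manin obstruction to the Hasse principle is the only one for $X_s$.
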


\section{Proof of Theorem \ref{OneX}}

We will follow the proof of \cite[Thm. 1.2]{Ier1} adjusting it accordingly. Before giving the details, let us give the key ideas of the proof of \cite[Thm. 1.2]{Ier1}. First we note that instead of $X$ it suffices to prove the analogous statement for $X\times \mathbb P^1$ (see eg. \cite[Lem. 3.1]{Ier1}).
Let us work with the more general situation where we have a smooth proper map $g:Y\to \mathbb P^1$.
Given a finite subset $B\subseteq \Br(Y)$, a family of local zero-cycles of degree $d$ on $Y$ orthogonal to $B$ and a Hilbert subset $H$ of $\mathbb P_k^1$, we can use the fibration method in order to find a closed point $s$ in $H$
with the property that the fibre $Y_{s}$ has a family of local points orthogonal to the image of $B$ in $\Br(Y_s)$. Moreover we have some control on the degree of the field extension $k(s)/k$.
If the Brauer-Manin obstruction to the Hasse principle is the only one for  $Y_s$ and $B$ surjects onto $\Br(Y_s)/\Br_0(Y_s)$ then we know that there exists a $k(s)$-rational point on $Y_s$, which in turn will allow us
to construct a global zero-cycle on $Y$ of the appropriate degree. The main difficulty in the above procedure is to define $H$ in a way which will guarantee that $B$ surjects onto $\Br(Y_s)/\Br_0(Y_s)$ for
the closed point $s\in H$ given to us by the fibration method.

 From now on we use the notation and terminology used in the proof of \cite[Thm. 1.2]{Ier1}. In particular $\Gamma_k$ denotes the Galois group $\Gal(\ov k/k)$.

\un{ Assume 3(I).} Then the proof is exactly the same as the proof of \cite[Thm. 1.2]{Ier1}: we need only replace the reference \cite[Thm. C]{OS} to the reference \cite[Thm. 5.1]{OS}.
Note that for any integer $C$, $\Br(\ov{X})$ has finitely many elements of order $\leq C$ (see e.g. \cite[Cor. 5.2.8]{CT.Sk}).

\un{Assume 3(II).} In this case in the proof of \cite[Thm. 1.2]{Ier1} we need to define the Hilbert subset, $H$, of $\mathbb P^1$ differently. According to \cite[Thm 1.2.1]{CC} there exists a constant $C$ such that
 $$|\Br(\ov X)^{\Gal(\ov k/N)}\{n\}| \leq C$$
 for any field extension $N/k$ with $[N:k]\leq \delta$. Now, $\Br(\ov X)$ contains finitely many elements of  order at most $C$ and each one of them is stabilized by a subgroup of $\Gamma_k$ of finite index, which corresponds to a finite extension
of $k$. Taking the Galois closure of the finitely many such extensions we produce a finite Galois extension $M/k$. We now continue just like in the proof of \cite[Thm. 1.2]{Ier1} with the definition of $H$ and the application of \cite[Thm. 6.2]{HW}.

In order to conclude in this case we  need to show that the image of $\Br(X)$ in $\Br(X_{k'})/\Br_0(X_{k'})$ contains the image of $\Br(X_{k'})\{n\}$, where $k'=k(P)$ and $P$ is a closed point of $H$ with $[k':k]=\delta$.
By our constructions we have the following commutative diagram with exact rows
(this is similar to the diagram appearing in in the proof of \cite[Thm. 1.2]{Ier1}, the difference being that in this case we consider the $n$-primary part of the various groups).

\[
\begin{tikzcd}[column sep=tiny]
0\arrow{r}&H^1(\Gal(F/k), \Pic \ov X)\{n\}\arrow{r}\arrow{d} &\Br(X)/\Br_0(X)\{n\}\arrow{r}\arrow{d}
&\Br( \ov X)^{\Gamma_k}\{n\}\arrow{d} \arrow{r}& H^2(\Gal(F/k), \Pic(\ov X))\{n\}\arrow{d}\\
0\arrow{r}&H^1(\Gal(F'/k'), \Pic \ov X)\{n\}\arrow{r}&\Br(X_{k'})/\Br_0(X_{k'})\{n\}\arrow{r}&\Br( \ov X)^{\Gamma_{k'}}\{n\}\arrow{r}&H^2(\Gal(F'/k'), \Pic(\ov X))\{n\}
\end{tikzcd}
\]

\noindent  where the first, third and fourth vertical arrows are isomorphisms. The result follows from this.

\un{Assume 3(II) (a).} Note that it follows from the proof of \cite[Prop. 7.5]{HW} that we can assume that  the $\delta$ appearing in the proof of \cite[Thm. 1.2]{Ier1} satisfies $\delta\equiv d\mod n$.
Since $d$ is coprime to $n$ in this case, we may therefore assume that $\delta$ is coprime to $n$ as well.

In this case we also need to define the Hilbert subset, $H$, of $\mathbb P^1$ differently. Let $M/k$  be the finite Galois extension defined as the the fixed field of the kernel of the map
$\Gal(\ov k/k)\to \Aut(\Br(\ov X)[n]$.  We now continue just like in the proof of \cite[Thm. 1.2]{Ier1} with the definition of $H$ and the application of \cite[Thm. 6.2]{HW}.
We can conclude by the same arguments as the previous case, once we show that the map $\Br( \ov X)^{\Gamma_k}\{n\}\to \Br( \ov X)^{\Gamma_{k'}}\{n\}$ is an isomorphism.
It suffices to show that $\Gamma_k$ and $\Gamma_{k'}$ have the same image in $\Aut(\Br(\ov X)[\ell^m])$ for any $\ell$ dividing $n$ and any integer $m$. Fix $\ell$ dividing $n$. Since $k'$ is linearly disjoint from $M$, it follows that $\Gamma_k$ and $\Gamma_{k'}$ have the same image in $\Aut(\Br(\ov X)[n])$ and so a fortiori in  $\Aut(\Br(\ov X)[\ell])$. We can now proceed as in the last part of the proof of \cite[Lem. 4.2]{BalN},
once we justify that $\ker( \Aut ( \Br(\ov X)[\ell^m])\to \Aut ( \Br(\ov X)[\ell]))$ is an $\ell$-group. 
Since $\Br(\ov X)[\ell^m]$ is finite, the statement we need holds by Lemma \ref{grpthr}.

\begin{lem}\label{grpthr}
Let G be a finite abelian $\ell$-group, and denote by $\alpha$ the natural map $\alpha :\Aut( G )\to \Aut ( G[\ell] )$.
Then $\ker(\alpha)$ is an $\ell$-group.
\end{lem}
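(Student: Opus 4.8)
The plan is to prove that every element of $\ker(\alpha)$ has order a power of $\ell$; since $\ker(\alpha)$ is a subgroup of the finite group $\Aut(G)$ and hence finite, Cauchy's theorem then forces $|\ker(\alpha)|$ to be a power of $\ell$, i.e. $\ker(\alpha)$ is an $\ell$-group. So I fix $\phi\in\ker(\alpha)$, write $G$ additively, and set $\nu:=\phi-\mathrm{id}\in\mathrm{End}(G)$; by definition of $\alpha$ the condition $\phi\in\ker(\alpha)$ says precisely that $\nu$ vanishes on $G[\ell]$.

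The key step is to exploit the characteristic filtration $0\subseteq G[\ell]\subseteq G[\ell^2]\subseteq\cdots\subseteq G[\ell^N]=G$, where $\ell^N$ is the exponent of $G$, and to show that $\nu$ lowers it by one step, namely $\nu(G[\ell^k])\subseteq G[\ell^{k-1}]$ for every $k\geq 1$. Indeed, for $x\in G[\ell^k]$ the element $\ell^{k-1}x$ lies in $G[\ell]$ and is therefore fixed by $\phi$, so $\ell^{k-1}\nu(x)=\nu(\ell^{k-1}x)=0$, which is exactly the assertion $\nu(x)\in G[\ell^{k-1}]$. (For $k=1$ this recovers the hypothesis that $\nu$ kills $G[\ell]$.) Composing, $\nu^N$ carries $G=G[\ell^N]$ down to $G[\ell^0]=0$, so $\nu$ is nilpotent with $\nu^N=0$.

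It then remains to pass from nilpotence of $\nu$ to $\ell$-power order of $\phi=\mathrm{id}+\nu$. This is the point that needs care, because $G$ is not an $\mathbb{F}_\ell$-vector space and so I cannot simply invoke ``unipotent implies $\ell$-power order''; instead I would combine the nilpotence with the torsion of the coefficient ring. Every endomorphism of $G$ is killed by $\ell^N$, since $G$ has exponent dividing $\ell^N$, whence $\ell^N\cdot\mathrm{End}(G)=0$. Expanding $\phi^{\ell^m}=(\mathrm{id}+\nu)^{\ell^m}=\sum_{i\geq 0}\binom{\ell^m}{i}\nu^i$, the terms with $i\geq N$ vanish because $\nu^N=0$, while for each fixed $i$ with $1\leq i\leq N-1$ the $\ell$-adic valuation of $\binom{\ell^m}{i}$ equals $m-v_\ell(i)$ and hence is at least $N$ once $m$ is sufficiently large. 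For such $m$ every remaining term with $i\geq 1$ satisfies $\binom{\ell^m}{i}\nu^i\in\ell^N\cdot\mathrm{End}(G)=0$, leaving $\phi^{\ell^m}=\mathrm{id}$. Thus $\phi$ has $\ell$-power order, and the proof concludes as in the first paragraph.

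I expect the filtration-lowering property $\nu(G[\ell^k])\subseteq G[\ell^{k-1}]$ to be the conceptual heart, and the final binomial-and-valuation bookkeeping to be the only technically delicate point, precisely because the ambient group fails to be semisimple over $\mathbb{F}_\ell$.
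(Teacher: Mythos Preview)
Your proof is correct. The filtration argument showing $\nu(G[\ell^k])\subseteq G[\ell^{k-1}]$ and hence $\nu^N=0$ is clean, and the binomial step is valid: for $1\le i<\ell^m$ one has $v_\ell\binom{\ell^m}{i}=m-v_\ell(i)$ (e.g.\ from $\binom{\ell^m}{i}=\frac{\ell^m}{i}\binom{\ell^m-1}{i-1}$ together with Lucas' theorem, which shows $\binom{\ell^m-1}{i-1}$ is an $\ell$-adic unit), so for $m$ large every coefficient with $1\le i\le N-1$ lies in $\ell^N\Z$ and the corresponding term vanishes in $\mathrm{End}(G)$.

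The paper, by contrast, gives no argument at all: it simply cites structural results of Mader on $\Aut(G)$ for a finite abelian $\ell$-group. Your approach is genuinely different and more self-contained; it avoids any external input and works directly with the characteristic filtration. What the citation buys is brevity and a pointer to a full description of $\Aut(G)$ (from which the lemma is immediate); what your argument buys is an elementary, transparent proof that generalises verbatim to any automorphism of a finite-exponent abelian $\ell$-group acting trivially on the $\ell$-torsion.
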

\begin{proof}
  This follows from \cite[Lem. 3.1 and Thm. 3.2]{M}.
\end{proof}

\section{Proof of Theorem \ref{Fam.P1}}

We will again follow the ideas of the proof of \cite[Thm. 1.2]{Ier1} adjusting them accordingly. Note that in contrast to \cite{Ier1} we are not using the trivial fibration here.

For the convenience of the reader we repeat some details. We first find a finite set $S\subset \Omega$ containing the infinite
places so that $X$ has a smooth projective model $\mathcal X$ over $\mathcal
O_S$. Let $\alpha_i\in \Br(X)$, $1\leq i \leq n$, be elements whose classes generate
$\Br(X)/\Br_0(X)$. By enlarging $S$ if necessary, we can furthermore assume
that each $\alpha_i$ extends to an element of $\Br(\mathcal X)$. Hence for any
$1\leq i \leq n$, $v \notin S$ and $b\in Z_0(X_{k_v})$ we have that
$\inv_v(\langle \alpha_i,b \rangle)=0$. Let $B$ be the span of
the $\alpha_i$ in $\Br(X)$.

Let $z_{\A}=\{z_v\}_{v \in \Omega}$ be a family of local zero-cycles of
degree $d$ on $X$, which is orthogonal to $B$. In a similar way as in the proof of \cite[Thm. 1.2]{Ier1}, we use results from \cite{HW} in order to construct a zero-cycle, $z$, of degree $d-\delta$ on $X$ and  a family of effective, reduced local zero-cycles of constant
degree $\delta$ on $X$, say $z'_{\A}$, such that $z'_{\A}$ is orthogonal to $B$.

We now define a Hilbert subset of $\mathbb P^1_k$ as follows.
By \cite[Fact 3.4.1]{CC} the image of $(\mathbb P^1)^{\text{ex}}_{\ell,\leq \delta}$ in $\mathbb P^1$ is finite (for the notation see \cite[\textsection 3]{CC}). Denote by $U_{\ell}$ its complement and set $H_1:=\cap_{\ell \in R}U_{\ell}$, where $R$ is the set of primes dividing $n$. Let $s\in H_1$ with $[k(s):k]=\delta$. By \cite[Thm. 1.2.1 and Prop. 3.2.1]{CC} there exists an integer $C$, which only depends on $n$ and $\delta$, such that
$$
|\Br(X_{\ov s})^{\Gal(\ov k/k(s))}\{n\}|<C.
$$
Moreover the natural specialisation maps $\NS(X_{\ov{\eta}})\to \NS(X_{\ov{s}})$, and $\Br(X_{\ov{\eta}})\to \Br(X_{\ov{s}})$ are isomorphisms.

Denote by $k(t)$ the function field of $\mathbb P^1_k$, so that $\Spec(k(\eta))=\Spec(k(t))$. Note that $\Gal(\ov{k(t)}/\ov{k}(t))$ acts trivially on $H^2(X_{\ov{\eta}},\Q/\Z(1))$; this follows from the fact that $H^2(X_{\ov{\eta}},\Q/\Z(1))$
is in a natural way a $\pi_1(\mathbb P^1_k,\ov{\eta})$-module (cf. \cite[proof of Lem. 4.3]{HW} for the last statement).
By \cite[II, Thm. 3.1]{Gro} it follows that $\Br(X_{\ov{\eta}})$ is a $\Gamma_k$-module.
Consider the finitely many elements of $\Br(X_{\ov \eta})$ whose order is less  than $C$.
Each one of them is stabilized by a subgroup of $\Gal(\ov k/k)$ of finite index, which corresponds to a finite extension
of $k$. Taking the Galois closure of the finitely many such extensions we produce a finite Galois extension $F/k$.
For this $F$ we now choose a Hilbert subset $H_2$ of $\mathbb P^1_k$ as in \cite[Lem. 3.2]{Ier1}.

For the remainder of the proof we denote by $s$ a closed point of $H_1\cap H_2$ with $[k(s):k]=\delta$,
and we freely identify $s$ and $\eta$ with $\Spec(k(s))$ and $\Spec(k(\eta))$. Sometimes we will denote the latter by $\Spec(k(t))$.

Recall that $H^3(k,\mathbb G_m)=H^3(k(t),\mathbb G_m)=0$ for a number field $k$ (see e.g. \cite[Lem. 2.6]{CTP}). It follows from the Hochschild-Serre spectral sequence that

$$
0\to H^1(\eta, \Pic(X_{\ov{\eta}})) \to\Br(X_{\eta})/\Br_0(X_{\eta}) \to \Br(X_{\ov{\eta}})^{\Gamma_{\eta}}\to H^2(\eta, \Pic(X_{\ov{\eta}}))
$$
and

$$
0\to H^1(s, \Pic(X_{\ov{s}})) \to\Br(X_{s})/\Br_0(X_{s}) \to\Br(X_{\ov{s}})^{\Gamma_{s}}\to H^2(s, \Pic(X_{\ov{s}}))
$$
are exact (this is well-known see e.g. \cite[Lem. 3.6]{Lia5}).

Because of our assumptions we have that  $\Br(X_{\ov{\eta}})^{\Gamma_{\eta}}= \Br(X_{\ov{\eta}})^{\Gamma_{k}}\subseteq \Br(X_{\ov{s}})^{\Gamma_{s}}$.
As $\Br(X_{\ov{s}})^{\Gamma_{s}}\{n\}$ is finite the same is true for its subgroup  $\Br(X_{\ov{\eta}})^{\Gamma_{\eta}}\{n\}$. Since moreover $\Pic(X_{\ov{\eta}})$ is finitely generated and free, we can choose a large enough finite Galois extension $L/k(t)$ such that  the image of
$\Br(X_{\ov{\eta}})^{\Gamma_{\eta}}\{n\}$ lies in the subgroup $H^2(\Gal(L/k(t)),\Pic(X_{\ov{\eta}}) )\{n\}$ of  $H^2(\eta,\Pic(X_{\ov{\eta}}) )\{n\}$(see \cite[proof of Lem. 3.3]{Ier1}).

Let $H_3$ be the Hilbert subset corresponding to the finite morphism whose generic fibre corresponds to the finite extension $L/k(t)$.
Set $H:=H_1\cap H_2\cap H_3\cap M$ and assume additionally that $s$ a closed point of $H$ with $[k(s):k]=\delta$.
As explained in  \cite[beginning of pg. 231, proof of Lem. 3.10]{Lia5}, we have that $\Gal(L/k(t))\cong \Gal(l/k(s))$ where $l$ is the specialisation of $L$ to $k(s)$ which is a field.

We now apply \cite[Thm 6.2]{HW} and continue just like in the proof of \cite[Thm. 1.2]{Ier1}.
In order to conclude we need to show that the image of $B$ in $\Br(X_{s})/\Br_0(X_{s})$ contains the image of $\Br(X_{s})\{n\}$.

Note that because of assumption (1) we have the following commutative diagram
\[
\begin{tikzcd}[column sep=tiny]
B \arrow{r}&\Br(X)/\Br_0(X) \arrow{r}\arrow{rd}&\Br(X_{\eta})/\Br_0(X_{\eta})\arrow{d} \\
        &                           &\Br(X_{s})/\Br_0(X_{s}) \\
\end{tikzcd}
\]
where the map $B\to \Br(X_{\eta})/\Br_0(X_{\eta})$ is surjective (for the definition of the map $\Br(X_{\eta})/\Br_0(X_{\eta})\to \Br(X_{s})/\Br_0(X_{s})$
under our assumptions see \cite[\textsection 3.3, pg 237]{Har}).

From all the above we deduce the existence of the following commutative diagram with exact rows.

\[
\begin{tikzcd}[column sep=tiny]
\Br(X_{\eta})/\Br_0(X_{\eta}) \{n\}\arrow{d}\arrow{r}{q_{\eta}}&\Br(X_{\ov{\eta}})^{\Gamma_{\eta}}\{n\}\arrow{d}\arrow{r} &H^2(\Gal(L/k(t)), \Pic(X_{\ov{\eta}}))\{n\}\arrow{d} \\
        \Br(X_{s})/\Br_0(X_{s})\{n\}\arrow{r}{q_{s}} &\Br(X_{\ov{s}})^{\Gamma_{s}}\arrow{r}\{n\} &H^2(\Gal(l/k(s)), \Pic(X_{\ov{s}}))\{n\} \\
\end{tikzcd}
\]

Like in the proof of \cite[Thm. 1.2]{Ier1} it follows from our constructions that the two right-most vertical maps are isomorphisms.
Moreover, again by our constructions, we have that $\ker(q_s)$ is canonically isomorphic to $\ker(q_{\eta})$ which is isomorphic to $H^1(\Gal(L/k(t)), \Pic(X_{\ov{\eta}}))\{n\}$.
Since $B$ surjects onto $\Br(X_{\eta})/\Br_0(X_{\eta})$ it follows by a diagram chase that $B\{n\}\to \Br(X_{s})/\Br_0(X_{s})\{n\}$ is surjective, which concludes the proof.

  \section{Proof of Theorem \ref{Fam.P1b}}

  This is very similar to the proof of Theorem \ref{Fam.P1}. We note the main difference. In this case we cannot appeal to \cite[Thm. 1.2.1]{CC} to show that
$$
|\Br(X_{\ov s})^{\Gal(\ov k/k(s))}\{n\}|<C.
$$
Instead, we need to appeal to  \cite[Thm. 5.1]{OS}, in order to show the stronger statement that there exists an integer $C$ such that
$$
|\Br(X_{\ov s})^{\Gal(\ov k/k(s))}|<C.
$$
 The rest of the proof is quite similar to the proof of Theorem \ref{Fam.P1}.

\section* {Acknowledgments}

The author would like to thank the referee for a careful reading of the manuscript and several useful comments.

\noindent Department of Mathematics and Statistics, University of Cyprus, P.O. Box 20537,
1678, Nicosia, Cyprus.

\noindent ieronymou.evis@ucy.ac.cy


\begin{thebibliography}{00}
\bibitem{BalN} F. Balestrieri and R. Newton, Arithmetic of rational points and zero-cycles on products of Kummer varieties and K3 surfaces.
\textit{Int. Math. Res. Not.} 2021, no. 6, 4255-4279.

\bibitem{CC} A. Cadoret and F. Charles, A remark on uniform boundedness for Brauer groups.
\textit{Algebr. Geom.} 7 (2020), no. 5, 512-522.



\bibitem{CT95}  J.-L. Colliot-Th\'el\`ene, L'arithm\'etique du groupe de Chow des z\'ero-cycles. Les Dix-huiti\`emes Journ\'ees
Arithm\'etiques (Bordeaux, 1993). \textit{J. Th\'eor. Nombres Bordeaux} 7 (1995), no. 1, 51-73.

\bibitem{CTP}  J.-L. Colliot-Th\'el\`ene and B. Poonen, Algebraic families of nonzero elements of Shafarevich-Tate groups.
J. Amer. Math. Soc. 13 (2000), no. 1, 83-99.

\bibitem{CT.S} J.-L. Colliot-Th\'el\`ene and J.-J. Sansuc, On the Chow groups of certain rational surfaces : A sequel to a
paper of S. Bloch. \textit{Duke Math. J.} 48 (1981).


\bibitem{CT.Sk} J.-L. Colliot-Th\'el\`ene and A.N. Skorobogatov,
The Brauer-Grothendieck group.
Ergebnisse der Mathematik und ihrer Grenzgebiete. 3. Folge. A Series of Modern Surveys in Mathematics, 71. Springer, Cham, [2021], (2021). xv+453 pp.

\bibitem{CV} B. Creutz and B. Viray, Degree and the Brauer-Manin obstruction.
With an appendix by A.N. Skorobogatov.
\textit{Algebra Number Theory} 12 (2018), no. 10, 2445-2470.


\bibitem{Gro} A. Grothendieck, Le groupe de Brauer, I, II, III. in Dix Expos\'es sur la Cohomologie des Sch\'emas, North-Holland, Amsterdam, (1968), pp. 46-188.



\bibitem{Har} D. Harari. Méthode des fibrations et obstruction de Manin. (French) [The fibration method and the Manin obstruction]
\textit{Duke Math. J.} 75 (1994), no. 1, 221-260.

\bibitem{HW} Y. Harpaz and O. Wittenberg,
On the fibration method for zero-cycles and rational points.
\textit{Ann. of Math.} (2) 183 (2016), no. 1, 229-295.

\bibitem{Ier1} E. Ieronymou, The Brauer-Manin obstruction for zero-cycles on K3 surfaces.
\textit{Int. Math. Res. Not.} (2021), no. 3, 2250-2260.




\bibitem{KS}  K. Kato and S. Saito, Global class field theory of arithmetic schemes in Applications of Algebraic K-
theory to Algebraic Geometry and Number Theory, Part I (Boulder, Colo., 1983), Contemp. Math. 55, Amer.
Math. Soc., Providence, 1986, 255-331.



\bibitem{Lia} Y. Liang, Arithmetic of 0-cycles on varieties defined over number fields.
\textit{Ann. Sci. Ec. Norm. Sup\'er.} (4) 46 (2013), no. 1, 35-56.

\bibitem{Lia2} Y. Liang, Towards the Brauer-Manin obstruction on varieties fibred over the projective line.
\textit{J. Algebra} 413 (2014), 50-71.

\bibitem{Lia3} Y. Liang, Approximation faible pour les 0-cycles sur un produit de vari\'et\'es rationnellement connexes.
\textit{Math. Proc. Cambridge Philos. Soc.} 164 (2018), no. 3, 485-491.

\bibitem{Lia4} Y. Liang, The local-global exact sequence for Chow groups of zero-cycles.
\textit{Math. Res. Lett.} 22 (2015), no. 1, 169-182.

\bibitem{Lia5} Y. Liang, Local-global principle for 0-cycles on fibrations over rationally connected bases.
\textit{Adv. Math.} 297 (2016), 214-237.


\bibitem{M} A. Mader, The automorphism group of finite abelian p-groups.
\textit{Ann. Sci. Math. Qu\'ebec} 36 (2012), no. 2, 559-575 (2013).



\bibitem{OS}  M. Orr and A.N. Skorobogatov, Finiteness theorems for K3 surfaces and abelian varieties of CM type.
\textit{Compos. Math.} 154 (2018), no. 8, 1571-1592.


\bibitem{SZ} A.N. Skorobogatov and Y. Zarhin, A finiteness theorem for the Brauer group of abelian varieties and K3 surfaces.
\textit{J. Algebraic Geom.} 17 (2008), no. 3, 481-502.

\bibitem{SZ2} A.N. Skorobogatov and Y. Zarhin, Kummer varieties and their Brauer groups.
\textit{Pure Appl. Math. Q.} 13 (2017), no. 2, 337-368.

\bibitem{W}  O. Wittenberg, Z\'ero-cycles sur les fibrations au-dessus d'une courbe de genre quelconque. \textit{Duke Math. J.}
161 (2012), no. 11, 2113-2166.


\end{thebibliography}
\end{document}